\newcommand\eps{\varepsilon}
\newcommand\R{{\mathbf{R}}}
\newcommand\Z{{\mathbf{Z}}}
\theoremstyle{plain}
  \newtheorem{theorem}[subsection]{Theorem}
  \newtheorem{conjecture}[subsection]{Conjecture}
  \newtheorem{proposition}[subsection]{Proposition}
  \newtheorem{lemma}[subsection]{Lemma}
\theoremstyle{remark}
  \newtheorem{remark}[subsection]{Remark}
\theoremstyle{definition}
\begin{document}

\title[Quantitative formulation of periodic Navier-Stokes]{A quantitative formulation of the global regularity problem for the periodic Navier-Stokes equation}
\author{Terence Tao}
\address{Department of Mathematics, UCLA, Los Angeles CA 90095-1555}
\email{tao@math.ucla.edu}
\subjclass{35Q30}

\vspace{-0.3in}
\begin{abstract}
The global regularity problem for the periodic Navier-Stokes system
\begin{align*}
\partial_t u + (u \cdot \nabla) u &= \Delta u - \nabla p \\
\nabla \cdot u &= 0 \\
u(0,x) &= u_0(x)
\end{align*}
for $u: \R^+ \times (\R/\Z)^3 \to \R^3$ and $p: \R^+ \times (\R/\Z)^3 \to \R$ asks whether to every smooth divergence-free initial datum $u_0: (\R/\Z)^3 \to \R^3$ there exists a global smooth solution.  In this note we observe (using a simple compactness argument) that this qualitative question is equivalent to the more quantitative assertion that there exists a non-decreasing function $F: \R^+ \to \R^+$ for which one has a local-in-time \emph{a priori} bound
$$ \| u(T) \|_{H^1_x( (\R/\Z)^3 )} \leq F( \|u_0\|_{H^1_x( (\R/\Z)^3 )} )$$
for all $0 < T \leq 1$ and all smooth solutions $u: [0,T] \times (\R/\Z)^3 \to \R^3$ to the Navier-Stokes system.  We also show that this local-in-time bound is equivalent to the corresponding global-in-time bound.
\end{abstract}

\maketitle

\section{Introduction}

Throughout this paper, $\Omega := (\R/\Z)^3$ will denote the standard three-dimensional torus.  This note is concerned with solutions to the periodic Navier-Stokes system\footnote{One can place a viscosity factor $\nu > 0$ in front of the $\Delta u$ term, but this can be easily normalised away by the change of variables $\tilde u(t,x) := \nu u(\nu t,x)$ and $\tilde p(t,x) := \nu^2 p(\nu t, x)$.}
\begin{equation}\label{ns}
\begin{split}
\partial_t u + (u \cdot \nabla) u &= \Delta u - \nabla p \\
\nabla \cdot u &= 0 \\
u(0,x) &= u_0(x)
\end{split}
\end{equation}
where $u: \R^+ \times \Omega \to \R^3$, $p: \R^+ \times \Omega \to \R$ is smooth, and $u_0: \Omega \to \R^3$ is smooth and divergence-free.  As is well known, the pressure $p$ can be eliminated from this system via Leray projections, and so we view this equation as an evolution equation for $u$ alone.

We have the following well-known unsolved conjecture (see e.g. \cite{feff}):

\begin{conjecture}[Global regularity for periodic Navier-Stokes]\label{gpns}  Let $u_0: \Omega \to \R^3$ be smooth and divergence free.  Then there exists a global smooth solution $u: \R^+ \times \Omega \to \R^3$, $p: \R^+ \times \Omega \to \R$ to \eqref{ns}.
\end{conjecture}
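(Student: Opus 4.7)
The standard strategy is to combine a local well-posedness theorem with an a priori bound that prevents blow-up. First I would set up the local theory: applying the Leray projection $\mathrm{P}$ to kill the pressure gradient converts \eqref{ns} into a semilinear heat equation $\partial_t u - \Delta u = -\mathrm{P}\nabla \cdot (u \otimes u)$, for which a contraction-mapping argument in $C([0,T]; H^1_x(\Omega))$ yields a unique smooth maximal solution on some interval $[0,T^*)$, with $T^* < \infty$ forcing $\limsup_{t \to T^*} \|u(t)\|_{H^1_x(\Omega)} = \infty$. This is a routine adaptation to the torus of the classical Fujita--Kato construction, together with persistence of regularity (smooth, divergence-free data yield smooth solutions on the full lifespan), so I would not expect any serious difficulty at this step.

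The conjecture then reduces to ruling out such a blow-up by establishing an a priori bound of the form $\|u(T)\|_{H^1_x(\Omega)} \leq F(\|u_0\|_{H^1_x(\Omega)})$ on the maximal lifespan. I would begin from the energy identity $\tfrac{d}{dt}\|u\|_{L^2_x}^2 = -2\|\nabla u\|_{L^2_x}^2$, which gives a global $L^\infty_t L^2_x \cap L^2_t H^1_x$ bound, and try to upgrade it to $L^\infty_t$ control of the enstrophy $\|\nabla u\|_{L^2_x}^2$. The natural route is the vorticity equation $\partial_t \omega + (u \cdot \nabla)\omega = \Delta \omega + (\omega \cdot \nabla)u$, which after integration by parts and Sobolev embedding produces a differential inequality of the shape $\tfrac{d}{dt} \|\omega\|_{L^2_x}^2 + \|\nabla \omega\|_{L^2_x}^2 \lesssim \|\nabla u\|_{L^2_x}^3$. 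Conditional blow-up criteria of Beale--Kato--Majda or Serrin type would then reduce matters to controlling an integrated scale-invariant norm such as $\int_0^{T^*} \|u(t)\|_{L^\infty_x}\, dt$ or $\int_0^{T^*} \|u(t)\|_{L^q_x}^s\, dt$ for admissible exponents.

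The main obstacle is that this last step is the central open problem of three-dimensional incompressible fluid dynamics. In three spatial dimensions the $L^2_x$ energy is \emph{supercritical} with respect to the Navier--Stokes scaling $u \mapsto \lambda u(\lambda^2 t, \lambda x)$: every rigorous unconditional a priori bound sits at supercritical scaling, while every conditional regularity criterion demands scale-invariant or subcritical information that the energy identity does not supply. The Grönwall-type inequality produced by the enstrophy method closes only for small data or short times; for large smooth initial data it furnishes no global bound. Short of identifying a genuinely new (almost-)monotone quantity at critical scaling, I do not see how to carry out this step in full generality. This is precisely why the present note, as announced in the abstract, does not aim to settle Conjecture~\ref{gpns} directly, but rather to reformulate it as an equivalent quantitative assertion about the existence of the function $F$, so that any future breakthrough on that a priori bound transfers immediately to the qualitative conjecture.
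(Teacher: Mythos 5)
This statement is an open conjecture --- the periodic version of the Clay Millennium problem --- and the paper does not prove it; it only establishes that it is \emph{equivalent} to the quantitative a priori bounds of Conjectures \ref{gpns-apriori} and \ref{gpns-apriori-short}. So there is no ``paper's own proof'' to compare against, and your proposal, which candidly stops at the a priori bound, is an accurate assessment rather than a proof. Your first paragraph does correctly reproduce the one conditional implication the paper actually carries out in this direction: the derivation of Conjecture \ref{gpns} from Conjecture \ref{gpns-apriori} is exactly ``iterate the local well-posedness theory,'' which is the paper's Proposition \ref{parab} (contraction mapping in $X^1_T = C^0_t H^1_0 \cap L^2_t H^2_0$ with lifespan $T \sim \|u_0\|_{H^1_0}^{-4}$, plus instantaneous smoothing). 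That matches the paper's argument essentially verbatim.

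The genuine gap is, of course, your second step: no unconditional a priori $H^1$ bound is known, and the enstrophy/vorticity differential inequality you write down closes only for small data or short times, exactly as you say. It is worth noting that the main theorem of this paper makes your difficulty precise in a strong form: the a priori bound \eqref{apt} is not merely a sufficient condition that happens to be hard, it is \emph{equivalent} to Conjecture \ref{gpns} (even the short-time version up to $T \leq 1$ suffices, by the Leray energy-dissipation argument in the derivation of Conjecture \ref{gpns-apriori} from Conjecture \ref{gpns-apriori-short}). So there is no route to the qualitative statement that avoids establishing some quantitative subcritical bound of this type; the obstruction you identify cannot be circumvented by soft or purely qualitative methods. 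Your proposal should therefore be read as a correct reduction of the conjecture to its known open core, not as a proof.
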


There is of course an enormous literature on this and related problems which we will not attempt to survey here; see for instance \cite{bert} for further discussion.

It is well known (e.g. by standard energy methods, see Section \ref{h1-sec} below) that the Navier-Stokes system \eqref{ns} is locally well-posed in the Sobolev space $H^1_x(\Omega)$, with a time of existence depending on the norm $\| u_0 \|_{H^1_x(\Omega)}$ of the initial data, and with the solution smooth on this time interval if the initial data was smooth.  Because of this, a positive answer to Conjecture \ref{gpns} would follow immediately (from standard continuity arguments) from the following long-term \emph{a priori} bound:

\begin{conjecture}[Long-term \emph{a priori} bound for periodic Navier-Stokes]\label{gpns-apriori}  
There exists a non-decreasing function $F: \R^+ \to \R^+$ for which one has an \emph{a priori} bound
\begin{equation}\label{apt-long}
 \| u(T) \|_{H^1_x( \Omega )} \leq F( \|u_0\|_{H^1_x( \Omega )} )
\end{equation}
for all $0 < T < \infty$ and all smooth solutions $u: [0,T] \times \Omega \to \R^3$ and $p: [0,T] \times \Omega \to \R$ to \eqref{ns}.
\end{conjecture}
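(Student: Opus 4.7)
The plan is to show the conjecture is equivalent to global regularity (Conjecture \ref{gpns}). One direction, that Conjecture \ref{gpns-apriori} implies Conjecture \ref{gpns}, is already observed in the excerpt via standard continuation. For the reverse, I would split the task into two stages: first extract a \emph{local-in-time} $H^1$ bound on $[0,T]$ with $T \le 1$, then bootstrap it to the global-in-time bound.

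For the local-in-time bound, I would argue by contradiction using weak compactness. If no such bound existed, there would be some $M>0$, a sequence of smooth divergence-free data $u_0^{(n)}$ with $\|u_0^{(n)}\|_{H^1(\Omega)} \le M$, times $T_n \in (0,1]$, and smooth solutions $u^{(n)}$ on $[0,T_n]$ with $\|u^{(n)}(T_n)\|_{H^1(\Omega)} \to \infty$. Extracting a subsequence with $u_0^{(n)} \rightharpoonup u_0$ in $H^1(\Omega)$ (and $u_0^{(n)} \to u_0$ strongly in $L^2$ by Rellich), I would use smooth mollification and Conjecture \ref{gpns} (plus Leray--Hopf weak-strong uniqueness to pass to the mollification limit) to produce a global smooth solution $u$ with initial datum $u_0$. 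A stability estimate for Navier--Stokes, performed in a topology weak enough for the limit to pass (typically $L^2$ via energy methods applied to $u^{(n)} - u$, combined with the uniform $H^1$ bound on $u^{(n)}$), would then show $\|u^{(n)}(t)\|_{H^1(\Omega)}$ is uniformly bounded on $[0, T]$, contradicting the blowup.

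To upgrade to the global-in-time bound, I would iterate the local-in-time bound on unit-length subintervals. The naive iterate would grow uncontrollably in time, but the energy identity $\tfrac12\|u(T)\|_{L^2}^2 + \int_0^T \|\nabla u\|_{L^2}^2\,dt = \tfrac12 \|u_0\|_{L^2}^2$ forces $\|\nabla u(t)\|_{L^2}$ to be small at some times, and parabolic smoothing at such a time drives $\|u(\cdot)\|_{H^1(\Omega)}$ below a universal small threshold, past which the small-data global existence theory supplies a uniform bound; the finitely many initial intervals are handled by the local bound directly.

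The hard part will be the stability step in the compactness argument. Because $3$D Navier--Stokes is critical at $H^{1/2}$, the space $H^1$ is only slightly subcritical, and passing weak $H^1$ limits through the nonlinearity $(u \cdot \nabla)u$ requires strong compactness in an intermediate space---typically obtained by combining Rellich in $L^2$ with uniform time regularity of $\partial_t u^{(n)}$ in a negative-order space via an Aubin--Lions argument. Choosing norms so that uniqueness in the limit applies and the blowup hypothesis is actually contradicted is the key technical point.
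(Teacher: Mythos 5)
Your high-level plan coincides with the paper's: the paper proves the cycle Conjecture~\ref{gpns-apriori} $\Rightarrow$ Conjecture~\ref{gpns} $\Rightarrow$ Conjecture~\ref{gpns-apriori-short} $\Rightarrow$ Conjecture~\ref{gpns-apriori}, and your ``local-in-time bound from global regularity by compactness'' stage is precisely the paper's $\ref{gpns} \Rightarrow \ref{gpns-apriori-short}$, while your ``upgrade by energy dissipation and small-data theory'' stage is its $\ref{gpns-apriori-short} \Rightarrow \ref{gpns-apriori}$ (the Leray argument). However, the compactness stage, as you sketch it, has a genuine gap that you yourself flag but do not resolve: an Aubin--Lions argument together with Leray--Hopf weak-strong uniqueness produces a weak limit $u$ and convergence $u^{(n)}\to u$ in $L^2$-type topologies, but this alone does \emph{not} contradict the hypothesis $\|u^{(n)}(T^{(n)})\|_{H^1}\to\infty$, and the ``uniform $H^1$ bound on $u^{(n)}$'' you appeal to in the stability estimate is only available at $t=0$ --- at positive times it is exactly the thing being proved. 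The paper's mechanism is different and cleaner: it needs no mollification, no weak-strong uniqueness, and no Aubin--Lions. It uses the subcritical $H^1_0$ local theory (its Proposition~\ref{parab}): parts (i)--(ii) directly produce a strong $H^1_0$ solution $u$ on a short interval that is smooth for $t>0$ (so one can then invoke Conjecture~\ref{gpns} at a positive time and extend $u$ globally and smoothly), and part (iii) is a quantitative $L^2$ difference estimate for $v^{(n)}=u^{(n)}-u$ that is then bootstrapped via the parabolic smoothing \eqref{smoothing} to yield $u^{(n)}\to u$ strongly in $C^0_t H^1_0$ on compact subsets of $(0,2T)$; crucially, this is iterated on subintervals of a length set by the $H^1$ norm of the global limit $u$, which is what propagates the $H^1$ control forward in time past the small existence window of the local theory. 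Without this iteration-plus-bootstrap step, the contradiction with the $H^1$ blowup does not close. In your energy-dissipation stage, you also omit the reduction to mean zero and the Poincar\'e inequality, which are what convert the pigeonholed smallness of $\|\nabla u(T')\|_{L^2}$ into smallness of the full $\|u(T')\|_{H^1}$; and the ``uniform bound from small data'' is obtained in the paper through a contraction inequality $\|u(T'+1)\|_{H^1}\le e^{-1}\|u(T')\|_{H^1}+C_E\|u(T')\|_{H^1}^2$ coming from the spectral gap of $\Delta$ on mean-zero functions, iterated in time --- a step worth making explicit, since it is what lets the bound hold uniformly for arbitrarily large $T$.
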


Alternatively, one could be more modest and only ask for the a priori bound up to time $1$:

\begin{conjecture}[Short-term \emph{a priori} bound for periodic Navier-Stokes]\label{gpns-apriori-short}  
There exists a non-decreasing function $G: \R^+ \to \R^+$ for which one has an \emph{a priori} bound
\begin{equation}\label{apt}
 \| u(T) \|_{H^1_x( \Omega )} \leq G( \|u_0\|_{H^1_x( \Omega )} )
\end{equation}
for all $0 < T \leq 1$ and all smooth solutions $u: [0,T] \times \Omega \to \R^3$ and $p: [0,T] \times \Omega \to \R$ to \eqref{ns}.
\end{conjecture}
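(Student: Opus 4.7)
The interesting direction is to derive Conjecture \ref{gpns-apriori-short} from Conjecture \ref{gpns}; the converse follows from local $H^1_x(\Omega)$ well-posedness by a standard continuity argument. My plan is to prove \eqref{apt} by contradiction using the compactness argument hinted at in the abstract.

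Suppose \eqref{apt} fails. Then there exist $M>0$ and a sequence of smooth solutions $u_n:[0,T_n]\times\Omega\to\R^3$ with $T_n\in(0,1]$, $\|u_n(0)\|_{H^1_x}\leq M$, and $\|u_n(T_n)\|_{H^1_x}\to\infty$. Standard energy estimates (cf. Section \ref{h1-sec}) furnish a time $\tau=\tau(M)>0$ on which each $u_n$ satisfies $\|u_n(t)\|_{H^1_x}\leq 2M$ and is uniformly bounded in $L^2_t H^2_x$, with $\partial_t u_n$ bounded in $L^2_t H^{-1}_x$ via \eqref{ns}. Passing to a subsequence, $T_n\to T_\infty\in[\tau,1]$, and by Aubin--Lions compactness the $u_n$ converge strongly in $C^0_t L^2_x$ and weakly in $L^\infty_t H^1_x \cap L^2_t H^2_x$ on $[0,\tau]$ to a strong solution $v$ with $v(0)=v_0$ the weak-$H^1_x$ limit of $u_n(0)$; parabolic smoothing upgrades $v$ to a smooth solution on $(0,\tau]$.

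Now invoke Conjecture \ref{gpns} with the smooth, divergence-free datum $v(\tau/2)$ to extend $v$ to a smooth solution on all of $[0,T_\infty]\times\Omega$, yielding the finite bound $B:=\sup_{t\in[0,T_\infty]}\|v(t)\|_{H^1_x}<\infty$. Writing $u_n=v+w_n$, I would apply a short-time $H^1_x$ perturbation estimate to $w_n$ on consecutive subintervals of length $\tau(3B)$ -- a uniform positive step determined by the smooth reference solution -- to propagate the $H^1_x$ bound on $w_n$ from $[0,\tau]$ to all of $[0,T_\infty]$ in finitely many steps. This gives a uniform-in-$n$ bound on $\|u_n(t)\|_{H^1_x}$ throughout $[0,T_\infty]$, contradicting $\|u_n(T_n)\|_{H^1_x}\to\infty$. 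Once \eqref{apt} is established, the equivalence with \eqref{apt-long} follows by iterating \eqref{apt} on consecutive unit time intervals, using $u(n)$ as the new initial datum at each integer time.

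The main obstacle is propagating $H^1_x$ control from the short well-posedness window $[0,\tau(M)]$ to the full compact interval $[0,T_\infty]$: naive iteration of short-time estimates can double the effective $H^1_x$ norm at each step and thus risks not reaching $T_\infty$ in finitely many steps. The essential, and seemingly only truly quantitative, use of Conjecture \ref{gpns} is precisely to convert the qualitative statement ``$v$ is globally smooth'' into the finite bound $B$ over a compact time interval, which fixes a uniform step size $\tau(3B)$ for the iteration and keeps the number of required steps finite and $n$-independent. One must also take care that $u_n(0)\to v_0$ only weakly in $H^1_x$, so that the smallness of $w_n$ has to be measured using a combination of $L^2_x$ smallness and the already-established $L^\infty_t H^1_x$ bound, rather than in pure $H^1_x$.
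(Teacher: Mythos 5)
Your main argument is essentially the paper's: a compactness-plus-contradiction scheme in which a limit solution is extracted from the hypothetical blowup sequence, Conjecture~\ref{gpns} is applied to that limit (after parabolic smoothing makes it smooth at a positive time) to get a finite $C^0_t H^1_x$ bound on a compact time interval, and the local theory is then iterated with a uniform step size determined by that bound to transfer the control to the approximating sequence and contradict the blowup. The technical route to the limit differs slightly: the paper passes to the weak $H^1_0$ limit $u_0$ of the data $u_0^{(n)}$, solves with datum $u_0$, and invokes its compactness lemma Proposition~\ref{parab}(iii) to obtain strong $C^0_t H^1_0$ convergence of the solutions at positive times; you instead apply Aubin--Lions to the solution sequence itself on a short window and then run a perturbation argument on $w_n = u_n - v$. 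Both routes must confront exactly the subtlety you flag at the end: the data converge only weakly in $H^1$, so the difference is initially small only in $L^2$ (via Rellich), and one must use parabolic smoothing to upgrade this to strong $H^1$ closeness at positive times before the step-by-step iteration can proceed; Proposition~\ref{parab}(iii) packages precisely this step. One small caution you should address: since $T_n$ may approach $T_\infty$ from above, the reference solution should be extended slightly past $T_\infty$ (the paper works on $[0,2T]$) so that the final comparison $\|u_n(T_n)\|_{H^1_x}\to\infty$ actually lands inside the interval where the convergence is established.

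The closing sentence --- deriving \eqref{apt-long} from \eqref{apt} ``by iterating \eqref{apt} on consecutive unit time intervals'' --- is incorrect: that iteration gives $\|u(T)\|_{H^1_x}\leq G^{(\lceil T\rceil)}(\|u_0\|_{H^1_x})$, a bound depending on $T$, which is not a uniform a priori bound. The paper's derivation of Conjecture~\ref{gpns-apriori} from Conjecture~\ref{gpns-apriori-short} instead uses Leray's energy-dissipation observation: the energy identity forces $\|\nabla u(T')\|_{L^2_x}$ to be small at some time $T'\lesssim_E 1$, the dissipative dynamics (via the spectral gap of $-\Delta$ on $\Omega$) then keeps $\|u(t)\|_{H^1_x}$ small and nonincreasing for $t\geq T'$, and only on the bounded interval $[0,T']$ is the short-term bound iterated finitely many times. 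This is outside the statement you were asked to prove, but it is not a routine iteration and should not be presented as such.
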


The main result of this paper is

\begin{theorem}[Equivalence of qualitative and quantitative regularity conjectures]\label{main} Conjecture \ref{gpns}, Conjecture \ref{gpns-apriori}, and Conjecture \ref{gpns-apriori-short} are all equivalent.
\end{theorem}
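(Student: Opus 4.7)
Two of the three implications are essentially formal. Conjecture~\ref{gpns-apriori} trivially implies Conjecture~\ref{gpns-apriori-short} (take $G := F$), and implies Conjecture~\ref{gpns} by a standard continuation argument: local $H^1$ well-posedness gives an existence interval whose length depends only on $\|u(t)\|_{H^1}$, so an a priori bound on that norm prevents finite-time blow-up for smooth data. To deduce Conjecture~\ref{gpns-apriori} from Conjecture~\ref{gpns-apriori-short}, I would pigeonhole on the energy identity
$$ \tfrac{1}{2}\|u(T)\|_{L^2}^2 + \int_0^T\|\nabla u\|_{L^2}^2\,dt = \tfrac{1}{2}\|u_0\|_{L^2}^2 $$
over $[T-1,T]$ (for $T \geq 1$) to produce some $t^* \in [T-1,T]$ with $\|\nabla u(t^*)\|_{L^2}^2 \leq \tfrac{1}{2}\|u_0\|_{L^2}^2$; combined with energy monotonicity this gives $\|u(t^*)\|_{H^1} \leq C\|u_0\|_{H^1}$, and applying Conjecture~\ref{gpns-apriori-short} on $[t^*,T]\subseteq [t^*,t^*+1]$ then gives the long-time bound $F(M) := G(CM)$.

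The main implication is Conjecture~\ref{gpns} $\Rightarrow$ Conjecture~\ref{gpns-apriori-short}, which I would prove by a compactness-and-contradiction argument. Suppose the short-time bound fails: there exist $M > 0$ and smooth solutions $u_n : [0,T_n] \times \Omega \to \R^3$ with $T_n \leq 1$, $\|u_n(0)\|_{H^1} \leq M$, and $\|u_n(T_n)\|_{H^1} \to \infty$. Local $H^1$ well-posedness gives a common subinterval $[0,\tau_0(M)]$ on which $\|u_n(t)\|_{H^1} \leq 2M$, so in particular $T_n \geq \tau_0(M)$. After extraction, $T_n \to T^* \in [\tau_0(M),1]$ and, by Rellich compactness, $u_n(0) \to u^*_0$ strongly in $L^2$ and weakly in $H^1$, where $u^*_0 \in H^1$ is divergence-free. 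Approximating $u^*_0$ in $H^1$ by a smooth divergence-free $\tilde u_0$ and applying Conjecture~\ref{gpns} produces a global smooth reference solution $\tilde u$ with $K := \sup_{t \in [0,1]}\|\tilde u(t)\|_{H^1} < \infty$.

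The plan from here is to first run a weak-strong $L^2$-stability estimate on the difference $w_n := u_n - \tilde u$: since $\tilde u$ is smooth, a Gr\"onwall argument on
$$ \tfrac{d}{dt}\|w_n\|_{L^2}^2 + 2\|\nabla w_n\|_{L^2}^2 = -2\int (w_n \cdot \nabla \tilde u) \cdot w_n \, dx $$
closes and yields $\|w_n\|_{L^\infty_t L^2_x([0,1])} \leq C(K)\|w_n(0)\|_{L^2}$, which can be made arbitrarily small by first choosing $\tilde u_0$ close to $u^*_0$ in $L^2$ and then taking $n$ large. Hence $u_n \to \tilde u$ uniformly in $L^2$ on $[0,1]$.

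The main obstacle is to upgrade this $L^2$-closeness to a uniform $H^1$ bound on $u_n$, contradicting $\|u_n(T_n)\|_{H^1} \to \infty$. My plan is a bootstrap combining (i) the uniform $H^1$-bound on $[0,\tau_0(M)]$ from local theory, (ii) the dissipation bound $\int_0^1 \|\nabla u_n\|_{L^2}^2\,dt \leq \tfrac{1}{2} M^2$ from the energy identity, and (iii) Aubin--Lions compactness, to extract further subsequences of $u_n$ converging strongly in $L^2_t H^1_x$ on subintervals of $[0,1]$; the limit is identified with $\tilde u$ by uniqueness of strong $H^1$-solutions, and the local $H^1$ well-posedness theory is then iterated on short intervals to propagate the $H^1$-bound all the way up to time $1$. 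This compactness-plus-bootstrap step — stitching the smooth global reference solution from Conjecture~\ref{gpns} together with the uniform local $H^1$-bounds on the sequence to yield a uniform bound on the whole of $[0,1]$ — is the main technical work of the proof.
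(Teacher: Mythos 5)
Your overall structure matches the paper: the same three implications, with the main work being Conjecture~\ref{gpns} $\Rightarrow$ Conjecture~\ref{gpns-apriori-short} by compactness and contradiction. Your treatment of Conjecture~\ref{gpns-apriori-short} $\Rightarrow$ Conjecture~\ref{gpns-apriori} is actually cleaner than the paper's: you pigeonhole on the last unit interval $[T-1,T]$ to find $t^*$ with small $\|\nabla u(t^*)\|_{L^2}$, use energy monotonicity for the $L^2$ part of $H^1$, and apply the short-time bound once on $[t^*,T]$. The paper instead pigeonholes over $[0,1/\eps^2]$ and proves an exponential decay inequality $\|u(T'+1)\|_{H^1} \leq e^{-1}\|u(T')\|_{H^1} + C_E\|u(T')\|_{H^1}^2$, which is more work but also yields the stronger Remark~3.3. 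For the purpose of Theorem~\ref{main} alone, your version suffices and is preferable. Your use of an approximating smooth datum $\tilde u_0$ is also a legitimate alternative to the paper's route (which instead invokes instantaneous regularity of the limiting $H^1_0$ solution at any positive time before applying Conjecture~\ref{gpns}); either works.

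There is, however, a genuine gap in your compactness step, and you have half-acknowledged it. The crux is how to upgrade $\|u_n - \tilde u\|_{L^\infty_t L^2_x} \to 0$ to a uniform $H^1$ bound on $u_n$ up to time $1$, and the Aubin--Lions mechanism you propose does not obviously deliver this. From the Leray-class bounds you cite --- $u_n$ bounded in $L^\infty_t L^2_x \cap L^2_t H^1_x$, plus a $\partial_t u_n$ bound in a negative space --- Aubin--Lions gives strong precompactness in $L^2_t L^2_x$ (or $L^2_t H^{s}_x$ for $s<1$), but \emph{not} in $L^2_t H^1_x$: for that you would need equi-integrability or a priori bounds one derivative higher (e.g.\ $L^2_t H^2_x$), which is precisely the uniform $H^1$ control you are trying to prove. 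Similarly, "iterating local $H^1$ theory on short intervals" only propagates a uniform $H^1$ bound if the $H^1$ norm at each restart time is controlled, and local theory alone gives bounds that grow geometrically under iteration; you need the sequence to \emph{converge} to $\tilde u$ in $H^1$ at the restart times to pin the norm down near $K$. The missing ingredient is a parabolic-smoothing statement: if $u_n(t_0)$ converges weakly in $H^1$ with bounded norm, then for every $\delta>0$ the solutions converge \emph{strongly} in $C^0_t H^1_x$ on $[t_0+\delta, t_0+\tau]$. This is what the paper isolates and proves as Proposition~\ref{parab}(iii), by first running the $L^2$-stability estimate (as you do) and then bootstrapping regularity of the \emph{difference} $v^{(n)} = u^{(n)} - u$ using the smoothing estimate \eqref{smoothing} and interpolation, gaining roughly half a derivative per step until the difference is controlled in $C^0_t H^1_0$. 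Your proposal needs this smoothing-of-the-difference argument (or an equivalent), not Aubin--Lions from Leray bounds, to close; once it is supplied, the rest of your plan goes through as in the paper.
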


This observation is not very deep, being based on well-known compactness properties of the Navier-Stokes flow, and would be unsurprising to the experts\footnote{For other examples of equivalences between qualitative global existence and a priori bounds, see e.g. \cite{bg}, \cite{tao:lens}.}, but the author was not able to find it in the previous literature\footnote{One could make the case, however, that such compactness results are already implicitly present in the literature on compact attractors for nonlinear parabolic equations.}.  The $H^1_x(\Omega)$ norm could be replaced by a variety of other subcritical norms (and probably some critical norms\footnote{For instance, in view of such results as \cite{ess}, \cite{gip}, it would be natural to consider the critical norm $L^3_x$.} also) but we will not pursue such generalisations here.  We stress that Theorem \ref{main} does \emph{not} make any serious progress\footnote{In particular, it does not address at all the fundamental issue of turbulence and supercriticality, since the estimate \eqref{apt} is subcritical and thus out of reach of all known methods.  We remark though that for spherically symmetric classical solutions to the logarithmically supercritical equation $\Box u = u^5 \log(2+u^2)$ in $\R^3$, one has a similarly subcritical \emph{a priori} estimate $\| u(T) \|_{H^2_x(\R^3)} \leq F( \|u(0)\|_{H^2_x(\R^3)} + \|u_t(0) \|_{H^1_x(\R^3)} )$; see \cite{tao-supercrit}.} on Conjecture \ref{gpns} itself,  but it does suggest that this conjecture cannot be solved by purely ``soft'' methods; some quantitative estimates must be involved.  For instance, it emphasises the (already well understood) point that if one seeks to establish Conjecture \ref{gpns} by a regularisation method (approximating $u$ by regularised solutions), it is essential to be able to control those solutions in a subcritical norm such as $H^1_x(\Omega)$ with a bound which is uniform in the choice of regularisation parameter.  Theorem \ref{main} also shows that in order to \emph{disprove} Conjecture \ref{gpns}, it suffices to demonstrate ``norm explosion'' of $H^1_x$, in the sense of \cite{cct}: a sequence of solutions which are bounded in $H^1_x$ at time zero, but are unbounded in $H^1_x$ at later times.

\subsection{Notation}

For any $s \in \R$, we use $H^s_0(\Omega)$ to denote the Banach space of distributions $f$ of mean zero on $\Omega$ whose Fourier transform $\hat f(k) := \int_\Omega e^{-i k \cdot x} f(x)\ dx$ is such that the norm $\|f\|_{H^s_0(\Omega)} := (\sum_{k \in \Z^3 \backslash \{0\}} |k|^{2s} |\hat f(k)|^2)^{1/2}$ is finite.  In practice, these functions will be vector-valued, living in $\R^3$ (or in some cases, $\R^3 \otimes \R^3 \equiv \R^9$).

If $I$ is a compact time interval, $s \in \R$, and $1 \leq p < \infty$, we use $L^p_t H^s_0(I \times \Omega)$ to denote the closure of the smooth mean zero functions on $I \times \Omega$ under the norm
$$ \| u \|_{L^p_t H^s_0(I \times \Omega)} := (\int_I \| u(t) \|_{H^s_0(\Omega)}^p\ dt)^{1/p}.$$
Similarly, we let $C^0_t H^s_0(I \times \Omega) \subset L^\infty_t H^s_0(I \times \Omega)$ be the closure of the smooth mean zero functions on $I \times \Omega$ under the norm
$$ \| u \|_{C^0_t H^s_0(I \times \Omega)} := \| u \|_{L^\infty_t H^s_0(I \times \Omega)}  
:= \sup_{t \in I} \| u(t) \|_{H^s_0(\Omega)}.$$

We use $X \lesssim Y$ to denote the estimate $X \leq CY$ for an absolute constant $C$.  If we need $C$ to depend on a parameter, we shall indicate this by subscripts, thus for instance $X \lesssim_s Y$ denotes the estimate $X \leq C_s Y$ for some $C_s$ depending on $s$.

\subsection{Acknowledgements}

The author is supported by NSF Research Award CCF-0649473 and a grant from the MacArthur Foundation.  The author also thanks Isabelle Gallagher, Nets Katz and Igor Rodnianski for helpful discussions, and Shuanglin Shao for corrections.

\section{Review of $H^1$ theory}\label{h1-sec}

In solving \eqref{ns} we first make a well-known reduction.  For smooth solutions to \eqref{ns}, the mean
$$ \overline{u} = \overline{u}(t) := \int_{\Omega} u(t,x)\ dx$$
is easily seen by integration by parts to be an invariant of the flow (this is just conservation of momentum), thus $\overline{u} = \overline{u_0}$.  By making the change of variables $\tilde u(t,x) := u(t, x - \overline{u_0} t)$ we thus easily reduce to the mean zero case $\overline{u} = \overline{u_0} = 0$.  In that case, as is well-known, we can use Leray projections to eliminate the pressure $p$ from \eqref{ns} and arrive at the equation
$$ \partial_t u = \Delta u + D( u \otimes u )$$
where $D$ is an explicit first order (tensor-valued) Fourier multiplier (or pseudo-differential operator) whose exact form does not need to be made explicit for our arguments, although we will note that the image of $D$ consists entirely of divergence-free mean-zero vector fields.  

By Duhamel's formula we have
\begin{equation}\label{utd}
 u(t) = e^{t\Delta} u_0 + \int_0^t e^{(t-t')\Delta}( D(u \otimes u) )\ dt'.
\end{equation}
Conversely, if $u$ and $u_0$ is smooth and obeys \eqref{utd}, with $u_0$ divergence free, then it is easy to see that $u$ solves \eqref{ns} with an appropriate choice of pressure $p$.

Now suppose $u_0$ is not smooth, but is merely in $H^1_0(\Omega)$.  We define a \emph{strong $H^1_0$ solution} of \eqref{utd} on a time interval $[0,T]$ to be a function $u \in X^1_T$ which obeys \eqref{utd} for all $0 \leq t \leq T$, where for any $s \in \R$, $X^s_T$ is the Banach space $C^0_t H^s_0([0,T] \times \Omega) \cap L^2_t H^{s+1}_0([0,T] \times \Omega)$
with norm
$$ \| u \|_{X_T} := \|u\|_{C^0_t H^s_0([0,T] \times \Omega)} + \|u \|_{L^2_t H^{s+1}_0([0,T] \times \Omega)}.$$
We observe that if $u_0$ is divergence-free then a strong $H^1_0$ solution $u$ of \eqref{utd} must be divergence-free also.

We recall the following standard energy estimate:

\begin{lemma}[Energy estimate]\label{energy}  Let $0 \leq T < \infty$ and $s \in \R$.  If $F \in L^2_t H^{s-1}_0( [0,T] \times \Omega )$ has spatial mean zero and $u_0 \in H^s_0(\Omega)$, then the function $u(t) := e^{t\Delta} u_0 + \int_0^t e^{(t-t')\Delta} F\ dt'$ lies in $X^s_T$ with
$$ \|u\|_{X_T^s} \lesssim_s \|u_0\|_{H^s_0(\Omega)} + \| F \|_{L^2_t H^{s-1}_0([0,T] \times \Omega)}.$$
\end{lemma}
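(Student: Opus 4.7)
The plan is to diagonalise the heat semigroup on the Fourier side and reduce to a single-mode estimate. Since all functions in sight have spatial mean zero, indices run over $k \in \Z^3 \setminus \{0\}$, and the heat propagator acts as multiplication by $e^{-t|k|^2}$. Writing $\hat u(t,k)$, $\hat u_0(k)$, $\hat F(t,k)$ for the Fourier coefficients, the Duhamel formula becomes
$$\hat u(t,k) = e^{-t|k|^2}\hat u_0(k) + \int_0^t e^{-(t-t')|k|^2} \hat F(t',k)\, dt'.$$
The goal is then to control the sums $\sum_k |k|^{2s}|\hat u(t,k)|^2$ (in $\sup_t$) and $\sum_k |k|^{2s+2}|\hat u(t,k)|^2$ (integrated in $t$) by $\sum_k |k|^{2s}|\hat u_0(k)|^2 + \int_0^T \sum_k |k|^{2s-2}|\hat F(t',k)|^2\, dt'$.

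For the $C^0_t H^s_0$ piece I would apply the triangle inequality and Cauchy--Schwarz in $t'$ (using $\int_0^t e^{-(t-t')|k|^2}\,dt' \leq |k|^{-2}$) to obtain, uniformly in $t\in[0,T]$,
$$|k|^{2s}|\hat u(t,k)|^2 \lesssim |k|^{2s}|\hat u_0(k)|^2 + |k|^{2s-2}\int_0^T |\hat F(t',k)|^2\, dt'.$$
Summing in $k\neq 0$ and taking the supremum in $t$ yields the desired bound on $\|u\|_{C^0_t H^s_0}$. For the $L^2_t H^{s+1}_0$ piece, I would multiply the same pointwise estimate by $|k|^{2s+2}$ and integrate in $t$. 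The homogeneous term contributes $|k|^{2s+2}\int_0^T e^{-2t|k|^2}\,dt\cdot |\hat u_0(k)|^2 \lesssim |k|^{2s}|\hat u_0(k)|^2$, and the Duhamel term, after Fubini and another application of $\int_{t'}^T e^{-(t-t')|k|^2}\,dt \leq |k|^{-2}$, contributes $|k|^{2s-2}\int_0^T |\hat F(t',k)|^2\,dt'$. Summing in $k$ closes this half as well. The implicit constants depend only on $s$, as required.

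The main point requiring a little care is not the estimate itself, which is mode-by-mode routine, but showing that $u$ genuinely lies in the Banach space $X^s_T$ (the closure of smooth mean-zero functions) rather than merely defining an element of $L^\infty_t H^s_0 \cap L^2_t H^{s+1}_0$ via Fourier coefficients. To address this I would first approximate $u_0$ and $F$ by Fourier truncations to finitely many modes, for which $u$ is literally smooth and the estimate is trivial, and then pass to the limit using the bounds just derived (which control the tails). Continuity of $t \mapsto u(t)$ into $H^s_0$ then follows from dominated convergence on the Fourier side, since for each $k$ the two contributions to $\hat u(t,k)$ are continuous in $t$ and are dominated uniformly in $t$ by the $k$-summable envelope furnished by the $C^0_t H^s_0$ bound above.
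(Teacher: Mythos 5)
Your proof is correct, but it takes a genuinely different route from the one in the paper. The paper works entirely in physical space: it reduces to $s=1$ by commuting with fractional derivatives, assumes $u_0$ and $F$ smooth by a limiting argument, then differentiates $\|u(t)\|_{H^1_0}^2$ in time, integrates by parts to extract the dissipation term $-c\|u(t)\|_{H^2_0}^2$, applies Cauchy--Schwarz to the forcing term, and integrates the resulting differential inequality. You instead diagonalise the heat semigroup on the Fourier side and prove the estimate mode by mode, with the homogeneous term handled by $\int_0^T e^{-2t|k|^2}\,dt\le\frac12|k|^{-2}$ and the Duhamel term by Cauchy--Schwarz in $t'$ plus Fubini (effectively Young's inequality for the one-sided convolution against $e^{-\tau|k|^2}$). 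Your version has the mild advantages of working for all $s$ at once without a reduction step, of making the gain of exactly one derivative in the $L^2_t$ piece and the $k\ne 0$ lower bound on the spectrum completely transparent, and of explicitly addressing why $u$ lies in the closure space $X^s_T$; the paper's energy-identity argument is the one that generalises more readily to settings without a usable Fourier basis. One small point of care in your approximation step: truncating $u_0$ and $F$ to finitely many Fourier modes does not by itself make $u$ smooth in $t$, since $F$ is only $L^2$ in time; you should also mollify $F$ in $t$ (or simply say ``approximate by smooth functions,'' which is what the paper does). This is a cosmetic fix and does not affect the substance of the argument.
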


\begin{proof}
The estimate commutes with fractional differentiation operators, so we may take $s=1$. By a limiting argument we may assume that $u_0$ and $F$ (and hence $u$) are smooth, thus $u$ solves the heat equation $u_t + \Delta u = F$ with initial data $u(0) = u_0$.  A standard integration by parts computation then reveals that
$$ \partial_t \| u(t) \|_{H^1_0(\Omega)}^2 \leq -c \|u(t)\|_{H^2_0(\Omega)}^2 + C \int_{\Omega} |\nabla^2 u(t)| |F(t)|\ dt$$
for some absolute constants $c, C > 0$ and all $0 \leq t \leq T$.  By Cauchy-Schwarz we thus have
$$ \partial_t \| u(t) \|_{H^1_0(\Omega)}^2 \leq -c' \|u(t)\|_{H^2_0(\Omega)}^2 + C'\| F(t) \|_{L^2(\Omega)}^2$$
for some other absolute constants $c', C' > 0$.  Integrating this in $t$, we obtain the claim.
\end{proof}

We also recall the smoothing estimate
\begin{equation}\label{smoothing}
\| e^{t\Delta} f \|_{H^{s+\delta}_0(\Omega)} \lesssim_{s,\delta} t^{-\delta/2} \| f \|_{H^s_0(\Omega)}
\end{equation}
for all $0 < t < \infty$, $s \in \R$, and $\delta > 0$, which follows easily from Fourier analysis.

We now review some well known local well-posedness theory for the equation \eqref{utd} in the space\footnote{Much better local well-posedness results are known, of course; see for instance \cite{koch}, \cite{gip}, \cite{auscher}.  Also, the uniqueness aspect of this proposition can be significantly strengthened using the ``weak-strong uniqueness'' theorems in the literature; see \cite{germain} for the most recent results in this direction.} $H^1_0(\Omega)$.

\begin{proposition}[Subcritical parabolic theory]\label{parab}  Let $A > 0$, and set $T := c A^{-4}$ for some small absolute constant $c > 0$.  Then the following statements hold.
\begin{itemize}
\item[(i)] (Existence and uniqueness)  If $u_0 \in H^1_0(\Omega)$ with $\|u_0\|_{H^1_0(\Omega)} \leq A$, then there exists a unique strong solution $u \in X^1_T$ to \eqref{utd}, with the bound
\begin{equation}\label{ctoa}
\|u\|_{X^1_T} \lesssim A.
\end{equation}
Furthermore the solution map $u_0 \mapsto u$ is Lipschitz continuous from the ball $\{ u_0 \in H^1_0(\Omega): \|u_0\|_{H^1_0(\Omega)} \leq A\}$ to $X^1_T$.
\item[(ii)] (Instantaneous regularity) The solution $u$ in (i) is smooth for all positive times $t > 0$.
\item[(iii)] (Compactness)  Let $u$ and $u_0$ be as in (i).  Let $u^{(n)}_0 \in H^1_0(\Omega)$ be a sequence with $\|u^{(n)}_0 \|_{H^1_0(\Omega)} \leq A$ which converges weakly in $H^1_0(\Omega)$ to $u_0$, and let $u^{(n)}$ be the associated strong solutions.  Then for any $0 < \eps < T$, $u^{(n)}$ converges strongly in $C^0_t H^1_0([\eps,T] \times \Omega)$ to $u$.
\end{itemize}
\end{proposition}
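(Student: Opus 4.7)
The plan is to follow the standard subcritical parabolic playbook: Picard iteration for (i), a regularity bootstrap for (ii), and an Aubin--Lions compactness argument combined with uniqueness for (iii).

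For (i), I would run Picard iteration for the Duhamel map
$$\Phi(u)(t) := e^{t\Delta} u_0 + \int_0^t e^{(t-t')\Delta} D(u \otimes u)\,dt'$$
on a ball of radius $\sim A$ in $X^1_T$. Lemma \ref{energy} with $s = 1$ reduces matters to the bilinear estimate
$$\|u \otimes v\|_{L^2_t H^1_x([0,T] \times \Omega)} \lesssim T^{1/4} \|u\|_{X^1_T} \|v\|_{X^1_T},$$
which I would derive from the Sobolev embedding $H^1(\Omega) \hookrightarrow L^6(\Omega)$, the interpolation $\|u\|_{H^{3/2}_0} \lesssim \|u\|_{H^1_0}^{1/2}\|u\|_{H^2_0}^{1/2}$, the product rule, and Cauchy--Schwarz in time. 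Taking $T = cA^{-4}$ with $c$ sufficiently small then makes $\Phi$ a contraction on the ball, producing the unique fixed point together with the bound \eqref{ctoa}; Lipschitz dependence on $u_0$ falls out of the same estimate applied to $\Phi(u) - \Phi(v)$.

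For (ii), I would bootstrap in regularity: pick $t_0 \in (0,T)$ at which $u(t_0) \in H^1_0(\Omega)$ (available for a.e.~$t_0$ since $u \in L^2_t H^2_x$), use the smoothing estimate \eqref{smoothing} on $e^{(t-t_0)\Delta} u(t_0)$, and apply higher-regularity analogues of the bilinear estimate above to successively gain $H^{1+k\delta}$ regularity on shrinking subintervals $[t_0 + k\tau, T]$, eventually obtaining $u \in C^\infty((0,T] \times \Omega)$.

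Part (iii) is where the real work lies. The uniform bound $\|u^{(n)}\|_{X^1_T} \lesssim A$ follows from (i), and the PDE then gives $\partial_t u^{(n)}$ uniformly bounded in $L^2_t L^2_x$. The Aubin--Lions lemma, applied to the uniform boundedness in $L^2_t H^2_x \cap H^1_t L^2_x$, yields a subsequence converging strongly in $L^2_t H^1_x$ to some $\tilde u$. The uniform $L^\infty_t H^1_x$ bound together with this strong $L^2_t H^1_x$ convergence lets me pass to the limit in each term of the Duhamel formula, while Rellich gives $u_0^{(n)} \to u_0$ strongly in $L^2$ (and hence $e^{t\Delta} u_0^{(n)} \to e^{t\Delta} u_0$ strongly in $X^1_T$ by the smoothing estimate), so that uniqueness from (i) forces $\tilde u = u$. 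To upgrade to $C^0_t H^1_0([\eps,T])$ convergence, I would use the uniform $L^2_t H^2_x$ bound plus Markov's inequality to locate, for each $n$, a time $t_\ast^{(n)} \in (\eps/4, \eps/2)$ at which $\|u^{(n)}(t_\ast^{(n)})\|_{H^2_0}$ is controlled uniformly in $n$; bootstrapping as in (ii) starting from $t_\ast^{(n)}$ (with constants depending only on $A$ and $\eps$) yields uniform $L^\infty_t H^s_x$ bounds on $[\eps/2, T]$ for every $s$. Combined with the corresponding uniform bound on $\partial_t u^{(n)}$ coming from the PDE, Arzel\`a--Ascoli delivers subsequential strong convergence in $C^0_t H^1_0([\eps,T])$, and since $u$ is the only possible accumulation point, the full sequence converges. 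The trickiest bookkeeping is verifying that the bootstrap constants from (ii) depend only on the $X^1_T$ norm and on the starting time, so that they apply uniformly across the sequence $\{u^{(n)}\}$.
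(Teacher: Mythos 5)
Your proofs of (i) and (ii) are essentially the paper's: (i) is the same Picard iteration with the same bilinear estimate gaining a factor of $T^{1/4}$; for (ii) you bootstrap from a smoother time slice $t_0$, whereas the paper interpolates the Duhamel map between $L^2_t L^2_x \to C^0_t H^1_0$ (energy estimate) and $L^\infty_t L^2_x \to C^0_t H^{2-\sigma}_0$ (smoothing) to get a gain of almost half a derivative from the uniform $L^4_t L^2_x$ bound on the forcing. Both are standard parabolic bootstraps and yield the same conclusion; the paper's version has the minor advantage of not having to select a good time slice. One small slip in your write-up of (ii): you say ``pick $t_0$ at which $u(t_0) \in H^1_0(\Omega)$ (available for a.e.~$t_0$ since $u \in L^2_t H^2_x$)''; you presumably mean $u(t_0) \in H^2_0(\Omega)$, since $u(t_0) \in H^1_0$ holds for \emph{all} $t_0$.

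For (iii) you take a genuinely different route. The paper sets $v^{(n)} := u^{(n)} - u$, observes that $v^{(n)}$ solves a linearized Duhamel equation with forcing $D(v^{(n)} \otimes u) + D(u^{(n)} \otimes v^{(n)})$, and runs the energy estimate at the \emph{lower} regularity $s=0$ to get the closed bound $\|v^{(n)}\|_{X^0_T} \lesssim \|v^{(n)}(0)\|_{L^2_x}$ directly; it then regains the missing derivative on $[\eps,T]$ by the same parabolic smoothing/interpolation used in (ii), landing on the fully quantitative estimate $\|v^{(n)}\|_{C^0_t H^1_0([\eps,T])} \lesssim_{\eps,A,T} \|v^{(n)}(0)\|_{L^2_x}$, after which Rellich finishes. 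Your argument instead invokes Aubin--Lions to produce a subsequential $L^2_t H^1_x$ limit, identifies it as $u$ by passing to the limit in Duhamel and invoking uniqueness, then upgrades to $C^0_t H^1_0([\eps,T])$ via uniform higher-regularity bounds (obtained by bootstrapping from a uniformly controlled $H^2$ time slice $t_\ast^{(n)} \in (\eps/4,\eps/2)$) plus Arzel\`a--Ascoli, and finally promotes subsequential to full-sequence convergence by uniqueness of the limit. This is correct and a legitimate alternative, but it is ``softer'': it delivers no rate, it requires the extra bookkeeping you flag (uniformity of the bootstrap constants across $n$), and it needs the subsequence-of-a-subsequence argument at the end, whereas the paper's difference estimate closes in one pass. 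Two small points to watch in your version: the claim that $e^{t\Delta} u_0^{(n)} \to e^{t\Delta} u_0$ strongly in $X^1_T$ from $L^2$ convergence of the data is not quite right, since the smoothing estimate degenerates as $t \to 0$ and $X^1_T$ contains $t=0$ (you only get convergence in $C^0_t H^1_0([\eps,T])$, which fortunately is all you use); and when you apply uniqueness you should confirm that the Aubin--Lions limit actually lies in $X^1_T$ (this follows once you know it solves the Duhamel equation with data $u_0 \in H^1_0$ and $\|u_0\|_{H^1_0}\leq A$, invoking (i)).
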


\begin{proof}  Fix $A$, and set $T := c A^{-4}$ for some sufficiently small absolute constant $c > 0$.

To prove (i), we fix $u_0$ and let $\Phi: X_T \to X_T$ denote the nonlinear map
$$ \Phi(u)(t) := e^{t\Delta} u_0 + \int_0^t e^{(t-t')\Delta}( D(u \otimes u) )\ dt',$$
thus the task is to establish that $\Phi$ has a unique fixed point which depends in a Lipschitz manner on $u_0$.

To see that $\Phi$ actually maps $X_T$ to $X_T$, we use Lemma \ref{energy} (with $s=1$) followed by the H\"older and Sobolev inequalities (and the fact that $D$ is of order $1$) to compute
\begin{align*}
\| \Phi(u) \|_{X_T} & \lesssim \| u_0 \|_{H^1_0(\Omega)} + \| D(u \otimes u ) \|_{L^2_t L^2_x([0,T] \times \Omega)} \\
&\lesssim A + \| \nabla(u \otimes u) \|_{L^2_t L^2_x([0,T] \times \Omega)} \\
&\lesssim A + T^{1/4} \| |u| |\nabla u| \|_{L^4_t L^2_x([0,T] \times \Omega)} \\
&\lesssim A + c^{1/4} A^{-1} \| u \|_{L^\infty_t L^6_x} \| \nabla u \|_{L^\infty_t L^2_x}^{1/2} \| \nabla u \|_{L^2_t L^6_x}^{1/2} \\
&\lesssim A + c^{1/4} A^{-1} \| u \|_{X_T}^2.
\end{align*}
If $c$ is small enough, we see that $\Phi$ thus maps the ball of radius $CA$ in $X_T$ to itself for some absolute constant $C$, and a similar argument then shows that $\Phi$ is a contraction on that ball if $c$ is small enough, thus yielding the desired unique\footnote{Strictly speaking, this argument only shows uniqueness of the fixed point inside a ball in $X_T$, however one can use continuity arguments (using the fact that a strong solution lies in $C^0_t H^1_0$ and thus varies continuously in time in the $H^1_0$ topology) to extend the uniqueness to all of $X_T$.  Alternatively one can modify the arguments used to prove property (iii) below.} fixed point, which also obeys \eqref{ctoa} and obeys the required Lipschitz property.

To show the regularity property (ii), we observe from the above computations that 
$$ \| D(u \otimes u) \|_{L^4_t L^2_x([0,T] \times \Omega)} \lesssim_A 1.$$
Now from Lemma \ref{energy} we have
$$ \| \int_0^t e^{(t-t')\Delta} F\ dt' \|_{C^0_t H^1_0([0,T] \times \Omega)} \lesssim \| F \|_{L^2_t L^2_x([0,T] \times \Omega)}$$
for any test function $F$, while from \eqref{smoothing} and Minkowski's inequality we easily verify that
$$ \| \int_0^t e^{(t-t')\Delta} F\ dt' \|_{C^0_t H^{2-\sigma}_0([0,T] \times \Omega)} \lesssim_\sigma \| F \|_{L^\infty_t L^2_x([0,T] \times \Omega)}$$
for all $\sigma > 0$, so by complex interpolation we see that
$$ \| \int_0^t e^{(t-t')\Delta} F\ dt' \|_{C^0_t H^{\frac{3}{2}-\sigma}_0([0,T] \times \Omega)} \lesssim_\sigma \| F \|_{L^4_t L^2_x([0,T] \times \Omega)}$$
for all $\sigma > 0$. Applying this with $F := D(u \otimes u)$ and using \eqref{smoothing} we see that
$$ \| u \|_{C^0_t H^{\frac{3}{2}-\sigma}_0([\eps,T] \times \Omega)} \lesssim_{\sigma,\eps,A} 1$$
for all $0 < \eps < T$ and $\sigma > 0$.  Thus we have obtained a smoothing effect of almost half a derivative.  One can continue iterating this argument (using the fractional Leibnitz rule instead of the ordinary one, or alternatively working in H\"older spaces $C^{k,\alpha}$ and using Schauder theory) to get an arbitrary amount of regularity; we omit the standard details.

Now we show the compactness property (iii).  Write $v^{(n)} := u^{(n)} - u$, then we see that $v^{(n)}(0)$ converges weakly to $H^1_0(\Omega)$ and thus strongly in $L^2_x(\Omega)$ (by the Rellich compactness theorem).  Also, by \eqref{utd}, $v^{(n)}$ solves the difference equation
$$ v^{(n)}(t) = e^{t\Delta} v^{(n)}(0) + \int_0^t e^{(t-t')\Delta}( D(v^{(n)} \otimes u) + D(u^{(n)} \otimes v^{(n)} ))\ dt'$$
and so by Lemma \ref{energy} with $s=0$, followed by H\"older, Sobolev, and \eqref{ctoa} (for both $u$ and $u^{(n)}$) we have
\begin{align*}
\| v^{(n)} \|_{X^0_T} &\lesssim \|v^{(n)}(0)\|_{L^2_x(\Omega)} + \| D(v^{(n)} \otimes u) + D(u^{(n)} \otimes v^{(n)} )\|_{L^2_t H^{-1}_0([0,T] \times \Omega)} \\
&\lesssim \|v^{(n)}(0)\|_{L^2_x(\Omega)} + T^{1/4} (\| v^{(n)} \otimes u \|_{L^4_t L^2_x([0,T] \times \Omega)} 
+ \| v^{(n)} \otimes u^{(n)} \|_{L^4_t L^2_x([0,T] \times \Omega)}) \\
&\lesssim
 \|v^{(n)}(0)\|_{L^2_x(\Omega)} + T^{1/4} \| v^{(n)} \|_{L^\infty_t L^2_x([0,T] \times \Omega)}^{1/2}
 \| v^{(n)} \|_{L^2_t L^6_x([0,T] \times \Omega)}^{1/2}  \\
 &\quad\quad
\times ( \| u \|_{L^\infty_t L^6_x([0,T] \times \Omega)} + \| u^{(n)} \|_{L^\infty_t L^6_x([0,T] \times \Omega)} )\\
&\lesssim
 \|v^{(n)}(0)\|_{L^2_x(\Omega)} + c^{1/4} \| v^{(n)} \|_{X^0_T}.
 \end{align*}
Since $v^{(n)}$ has a finite $X^0_T$ norm, we conclude (if $c$ is small enough) that
$$
 \| v^{(n)} \|_{X^0_T} \lesssim \|v^{(n)}(0)\|_{L^2_x(\Omega)}.$$
Now, the above argument in fact shows that
$$ 
\| D(v^{(n)} \otimes u) + D(u^{(n)} \otimes v^{(n)} )\|_{L^4_t H^{-1}_0([0,T] \times \Omega)}
\lesssim_{A,T} \|v^{(n)}(0)\|_{L^2_x(\Omega)}$$
and so by using the interpolation argument as before (but at one lower derivative of regularity) one concludes that
$$ \| v^{(n)} \|_{C^0_t H^{\frac{1}{2}-\sigma}_0([\eps,T] \times \Omega)} \lesssim_{\sigma,\eps,A,T} 
\|v^{(n)}(0)\|_{L^2_x(\Omega)}$$
for all $\sigma > 0$ and $0 < \eps < T$.  Iterating this argument a finite number of times (using the fractional Leibnitz rule) we eventually obtain
$$ \| v^{(n)} \|_{C^0_t H^1_0([\eps,T] \times \Omega)} \lesssim_{\eps,A,T} \|v^{(n)}(0)\|_{L^2_x(\Omega)}$$
for any $0 < \eps < T$.  Since the right-hand side goes to zero as $n$ goes to infinity, the claim (iii) follows.
\end{proof}

\section{Proof of Theorem \ref{main}}

By the discussion of the preceding section we may assume throughout that we are in the mean zero case.

\subsection{Derivation of Conjecture \ref{gpns} from Conjecture \ref{gpns-apriori}.}

This follows immediately from iterating Proposition \ref{parab}.

\subsection{Derivation of Conjecture \ref{gpns-apriori} from Conjecture \ref{gpns-apriori-short}.}

This shall exploit an observation of Leray that any finite energy solution to Navier-Stokes becomes well-behaved after a bounded amount of time, thanks to energy dissipation.

Let $u: [0,T] \times \Omega \to \R^3$ and $p: [0,T] \times \Omega \to \R$ be a smooth solution to \eqref{ns} for some time $0 < T < \infty$.  Write $E := \|u_0\|_{H^1_x(\Omega)}$.  From the well-known energy identity
$$ \partial_t \int_\Omega |u(t,x)|^2\ dx = - 2 \int_\Omega |\nabla u(t,x)|^2\ dx$$
we see that
$$ \| u \|_{L^\infty_t L^2_x([0,T] \times \Omega)} + \| \nabla u \|_{L^2_t L^2_x([0,T] \times \Omega)}
\lesssim_E 1.$$
Let $\eps > 0$ be a small number depending on $E$ to be chosen later.  Suppose for now that $T \geq 1/\eps^2$.  Then by the pigeonhole principle, we can thus find a time $0 \leq T'\leq 1/\eps^2$ such that
$$ \| \nabla u(T') \|_{L^2_x(\Omega)} \lesssim_E \eps.$$
Since $u$ has mean zero, this implies (by Poincar\'e's inequality) that
\begin{equation}\label{hix}
\| u(T') \|_{H^1_x(\Omega)} \lesssim_E \eps.
\end{equation}
Now let $T''$ be any time between $T'$ and $\min(T'+1,T)$.  From Proposition \ref{parab}, we have (for $\eps$ small enough) that
\begin{equation}\label{utt}
 \| u \|_{C^0_t H^1_x([T',T''] \times \Omega)} + \| u \|_{L^2_t H^2_x([T',T''] \times \Omega)} \lesssim_E \| u(T') \|_{H^1_x(\Omega)}.
 \end{equation}
Repeating the computations in that proposition, we conclude in particular that
$$ \| D(u \otimes u) \|_{L^2_t L^2_x([T',T''] \times \Omega)} \lesssim_E \| u(T') \|_{H^1_x(\Omega)} ^2.$$
Using \eqref{utd} and (part of) Lemma \ref{energy}, we conclude that
$$ \| u(T'') - e^{(T''-T')\Delta} u(T') \|_{H^1_x(\Omega)} \lesssim_E \| u(T') \|_{H^1_x(\Omega)} ^2.$$
The least non-trivial eigenvalue of $-\Delta$ on the torus $\Omega$ is 1; since $u(T')$ has mean zero, we thus have
$$ \| e^{(T''-T')\Delta} u(T') \|_{L^2_x(\Omega)} \leq e^{-(T''-T')} \|u(T') \|_{L^2_x(\Omega)}$$
for some $\lambda > 0$.  In particular, if $T'' = T'+1$ then from the triangle inequality we have
$$ \|  u(T'+1) \|_{H^1_x(\Omega)} \leq e^{-1} \|  u(T') \|_{H^1_x(\Omega)} + C_E \| u(T') \|_{H^1_x(\Omega)}^2$$
for some constant $C_E$ depending only on $E$.  From \eqref{hix} we conclude (if $\eps$ is small enough depending on $E$) that
$$ \|  u(T'+1) \|_{H^1_x(\Omega)} \leq \|  u(T') \|_{H^1_x(\Omega)} \lesssim_E \eps.$$
Iterating this inequality as far as we can, and then using \eqref{utt}, we conclude that
\begin{equation}\label{ch}
\| u \|_{C^0_t H^1_x([T',T] \times \Omega)} \lesssim_E \eps.
\end{equation}
On the other hand, since $T' \leq 1/\eps^2$, we can iterate Conjecture \ref{gpns-apriori-short} and obtain
\begin{equation}\label{Ch2}
\| u \|_{C^0_t H^1_x([0,T'] \times \Omega)} \lesssim_{E,\eps} 1.
\end{equation}
Combining \eqref{ch} and \eqref{Ch2} gives Conjecture \ref{gpns-apriori} in the case $T \geq 1/\eps^2$.  In the case $T < 1/\eps^2$, we omit all the steps leading up to \eqref{ch} and simply iterate Conjecture \ref{gpns-apriori-short} as in \eqref{Ch2}.  The claim follows.

\begin{remark} In the mean zero case, the above argument in fact allows us to strengthen \eqref{apt-long} to
\begin{equation}\label{exp-decay}
  \| u(T) - \overline{u_0} \|_{H^1_x( \Omega )} \leq e^{-T} \tilde F( \|u_0 - \overline{u_0} \|_{H^1_x( \Omega )} )
\end{equation}
for some function $\tilde F: \R^+ \to \R^+$ and all solutions $u: [0,T] \times \Omega \to \R^3$, $p: [0,T] \times \Omega \to \R$ to \eqref{ns}.  We leave the details to the reader.
\end{remark}

\subsection{Derivation of Conjecture \ref{gpns-apriori-short} from Conjecture \ref{gpns}}

Suppose for contradiction that Conjecture \ref{gpns-apriori-short} failed.  Carefully unwrapping all the quantifiers and using the axiom of choice\footnote{It is not difficult to eliminate the use of this axiom if desired, for instance by using an explicit (and explicitly well-ordered) countable dense subset of the smooth functions; we omit the details.}, we conclude that there exists $A > 0$ and smooth mean zero solutions $u^{(n)}: [0,T^{(n)}] \times \Omega \to \R^3$, $p^{(n)}: [0,T^{(n)}] \times \Omega \to \R^3$ to \eqref{ns} (and hence \eqref{utd}) with smooth mean zero divergence-free initial data $u^{(n)}(0) = u^{(n)}_0$, where $0 < T^{(n)} < 1$ and $\|u^{(n)}_0\|_{H^1_0(\Omega)} \leq A$ for all $n$, and such that
\begin{equation}\label{untn}
 \lim_{n \to \infty} \| u^{(n)}(T^{(n)}) \|_{H^1_0(\Omega)} = \infty.
 \end{equation}
By passing to a subsequence if necessary we may assume that $\lim_{n \to \infty} T^{(n)} = T$ for some $0 \leq T \leq 1$.  The assertion $T=0$ would contradict \eqref{ctoa}, so we may assume $0 < T \leq 1$.

By passing to a further subsequence we may assume that the $u^{(n)}_0$ are weakly convergent in $H^1_0(\Omega)$ to a limit $u_0 \in H^1_0(\Omega)$, which is also divergence-free.  Applying Proposition \ref{parab}, we can obtain a strong solution $u$ to the Navier-Stokes equation \eqref{utd} with initial datum $u_0$ for some short time $[0,\eps]$ with $\eps > 0$, and that $u$ is smooth, divergence-free and mean zero for all times $0 < t \leq \eps$.  If we then apply Conjecture \ref{gpns} to the initial datum $u(\eps)$, we thus see $u$ can be continued globally as a smooth mean zero solution for all $0 < t < \infty$.  In particular, $u \in C^0_t H^1_0( [0,2T] \times \Omega )$, and so there must exist $A < A' < \infty$ such that \begin{equation}\label{uto}
\sup_{0 \leq t \leq 2T} \|u(t)\|_{H^1_0(\Omega)} \leq A'.
\end{equation}

Now recall that $u^{(n)}_0$ converges weakly in $H^1_0(\Omega)$ to $u_0$ and have the bound $\|u^{(n)}_0 \|_{H^1_0(\Omega)} \leq A \leq A'$.  Then by repeated application of Proposition \ref{parab} (splitting $[0,2T]$ up into intervals of length $T(2A') > 0$, say, and only considering sufficiently large $n$) we see that $u^{(n)}$ converges strongly in $C^0_t H^1_0(I \times \Omega)$ to $u$ for all compact intervals $I \subset (0,2T)$.  But this is inconsistent with \eqref{untn} and \eqref{uto}.  Theorem \ref{main} follows.

\section{Remarks}

It will be clear to the experts that in our main theorem, the torus $\Omega$ could easily be replaced by any other compact surface, and that reasonable boundary conditions can also be added.  It is tempting to extend this result also to the non-periodic case, but the translation invariance causes some failure of compactness which needs to be addressed.  In principle, this can be solved either by concentration compactness, or by requiring that the data be compactly supported or rapidly decreasing; the latter is in fact used in the standard formulations of the non-periodic regularity problem (see e.g. \cite{feff}) but if one were to apply the above arguments to that setting, one encounters the annoying fact that the rapid decrease of the initial data is not preserved by the Navier-Stokes flow (due to the non-local nature of the Leray projection used to eliminate the pressure term).  We will not pursue these matters.

Define the monotone non-decreasing function $F: \R^+ \to [0,+\infty]$ by
$$ F(A) := \sup \{ \| u \|_{C^0_t H^1_x([0,T] \times \Omega)}: \| u_0 \|_{H^1_x(\Omega)} \leq A; 0 \leq T < \infty \}$$
where $u$ ranges over all local smooth solutions $u: [0,T] \times \Omega \to \R^3$ to \eqref{ns} with an initial datum $u_0$ of norm at most $A$.  By Theorem \ref{main}, the periodic Navier-Stokes global regularity conjecture is equivalent to the assertion that $F(A)$ is finite for all $A$.  One can show (by modifying Proposition \ref{parab}) that $F$ is finite for small $A$, and is also right-continuous; thus if the global regularity conjecture fails, there exists a critical value $0 < A_c < \infty$ such that $F(A_c) = +\infty$ and $F(A) < \infty$ for all $A < A_c$.  It is tempting to then use induction-on-energy arguments (as in e.g. \cite{gopher}) to try to analyse solutions at this critical value of the $H^1$ norm, but unfortunately the lack of any conservation law at the $H^1$ level seems to make this idea fruitless.  (In any event, there is nothing special about the $H^1$ norm in this argument; a large number of other subcritical norms would also work here.)

Our arguments yield no information as to the rate of growth of $F$.  The arguments in \cite{kuksin}  (see also \cite{cct}) should at least yield that $F(A)/A$ goes to infinity as $A \to \infty$, but it seems difficult to even obtain $F(A) \gtrsim A^{1+\eps}$ for some $\eps > 0$.  It seems of interest to understand this growth rate better, even at a heuristic level.

It is amusing to note that\footnote{We are indebted to Jean Bourgain for this observation.} for any fixed $0 < A, M < \infty$, the statement $F(A) < M$, if true, can be verified in finite time, by constructing sufficiently accurate and sufficiently numerous numerical solutions, and using some quantitative version of the compactness phenomenon in Proposition \ref{parab} to then rigorously establish $F(A) < M$; we omit the details.  A similar verifiability result holds for the statement $F(A) > M$.  Unfortunately, if $F(A)$ is infinite, there does not appear to be any obvious way to verify this in finite time.

As a final observation, we note from our results that if the Navier-Stokes regularity conjecture is true, then the solution map $u_0 \mapsto u$ is defined as a map from $H^1_0(\Omega)$ to $C^0_t H^1_0([0,\infty) \times \Omega)$; this essentially follows from \eqref{apt-long} and Proposition \ref{parab}.  Furthermore, by working more carefully with the proof of Proposition \ref{parab} and using the exponential decay \eqref{exp-decay} one can show that this map is Lipschitz continuous\footnote{It is important here that the mean $\overline{u_0}$ is fixed.  Small changes in the mean lead to drift in the solution, which is a non-Lipschitz (and non-uniformly continuous) effect.  However, the solution map remains continuous in this case.} on any bounded subset of $H^1_0(\Omega)$; we omit the details.  Thus one can view the Navier-Stokes regularity conjecture as an assertion that the Navier-Stokes flow enjoys global non-perturbative stability in the $H^1$ topology.

\end{document}